\documentclass{article}
\usepackage[utf8]{inputenc}
\usepackage{amsmath,amsfonts,amssymb,amsthm,fancyhdr,bm,verbatim,hyperref,centernot, enumerate, bbm}
\usepackage{fullpage}
\usepackage{mathtools}
\usepackage{todonotes}

\newtheorem{theorem}{Theorem}
\newtheorem{lemma}[theorem]{Lemma}

\newtheorem{proposition}[theorem]{Proposition}
\theoremstyle{definition}
\newtheorem{definition}[theorem]{Definition}
\topmargin =0 in
\headheight =0 in
\headsep =0 in
\textheight =9 in
\oddsidemargin =0 in
\textwidth =6.5 in

\DeclareMathOperator{\LCS}{\mathsf{LCS}}
\DeclareMathOperator{\SCS}{\mathsf{SCS}}

\DeclareMathOperator{\del}{\mathsf{del}}
\DeclareMathOperator{\ins}{\mathsf{ins}}

\newcommand{\ml}{m_{\mathsf{LCS}}}
\newcommand{\ms}{m_{\mathsf{SCS}}}
\newcommand{\Left}{\mathsf{left}}
\newcommand{\floor}[1]{\left\lfloor #1\right\rfloor}
\newcommand{\dist}{\textrm{d}}

\title{Logarithmically larger deletion codes of all distances}
\author{Noga Alon \thanks{Department of Mathematics, Princeton University, Princeton, NJ 08544, USA and Schools of Mathematical Sciences and Computer Science, Tel Aviv University, Tel Aviv, Israel. Email: {nalon@math.princeton.edu}. Research supported in part by NSF grant DMS-2154082}
\and 
Gabriela Bourla \thanks{Department of Mathematics, Princeton University, Princeton, NJ 08544. Email: gbourla@princeton.edu. Research supported by the math department's undergraduate funding.}
\and 
Ben Graham \thanks{Department of Mathematics, Princeton University, Princeton, NJ 08544. Email: bagraham@princeton.edu. Research supported by the math department's undergraduate funding.} 
\and 
Xiaoyu He \thanks{Department of Mathematics, Princeton University, Princeton, NJ 08544. Email: xiaoyuh@princeton.edu. Research supported by NSF Award DMS-2103154.}
\and 
Noah Kravitz \thanks{Department of Mathematics, Princeton University, Princeton, NJ 08544. Email: nkravitz@princeton.edu. Research supported by NSF GRFP Award DGE-2039656.}
}
%\date{July 2022}

\begin{document}

\maketitle

\begin{abstract}
The deletion distance between two binary words $u,v \in \{0,1\}^n$ is the smallest $k$ such that $u$ and $v$ share a common subsequence of length $n-k$. A set $C$ of binary words of length $n$ is called a $k$-deletion code if every pair of distinct words in $C$ has deletion distance greater than $k$. In 1965, Levenshtein initiated the study of deletion codes by showing that, for $k\ge 1$ fixed and $n$ going to infinity, a $k$-deletion code $C\subseteq \{0,1\}^n$ of maximum size satisfies $\Omega_k(2^n/n^{2k}) \leq |C| \leq O_k( 2^n/n^k)$.
We make the first asymptotic improvement to these bounds by showing that there exist $k$-deletion codes with size at least $\Omega_k(2^n \log n/n^{2k})$. Our proof is inspired by Jiang and Vardy's improvement to the classical Gilbert--Varshamov bounds.  We also establish several related results on the number of longest common subsequences and shortest common supersequences of a pair of words with given length and deletion distance.
\end{abstract}

\section{Introduction}
The main goal of coding theory is to construct schemes for efficiently and faithfully communicating messages across a noisy channel.  In this paper, we study a noise model proposed by Levenshtein~\cite{Le} in which messages are finite binary words in $\{0,1\}^n$ and the communication channel, a ``deletion channel,'' deletes a fixed number $k$ of bits from the transmitted message; the locations of the deletions are unknown to the receiver. Deletion errors are a special case of ``synchronization errors'', which are remarkably poorly understood compared to the better-studied noise models of bit flips and bit erasures.

Formally, for $n\ge k\ge 1$ a \emph{$k$-deletion code of length $n$} is a collection $C\subseteq \{0,1\}^n$ of binary words with the property that for any $y\in \{0,1\}^{n-k}$, there is at most one $x\in C$ containing $y$ as a subsequence. Equivalently, $C$ is a $k$-deletion code if for all distinct $s,t\in C$, the longest common subsequence of $s$ and $t$ has length strictly smaller than $n-k$. We would like to determine the maximum size $D(n,k)$ of a $k$-deletion code of length $n$. In his seminal 1965 paper~\cite{Le}, Levenshtein established the upper and lower bounds

\begin{equation}\label{eqn:levenshtein}
\Omega_k\left(\frac{2^n}{n^{2k}} \right) \leq D(n,k) \leq O_k\left(\frac{2^n}{n^{k}} \right).
\end{equation}

For the case $k=1$, Levenshtein used a construction of Varshamov and Tenengolts~\cite{VaTe} to show that $D(n,1)=\Theta(2^n/n)$, so the upper bound in (\ref{eqn:levenshtein}) is asymptotically correct in this case. In contrast, despite a great deal of effort on many related questions in recent years, neither bound in \eqref{eqn:levenshtein} has been improved for any fixed $k\ge 2$.  Since the breakthrough in~\cite{GuHa}, there has been some progress on constructing explicit $2$-deletion codes that nearly match Levenshtein's lower bound; see also \cite{BrGuZb, BuGuHa, GuHeLi, SiBr}.

Our main result is a logarithmic improvement on the lower bound, which holds for all alphabet sizes. Write $D_\alpha(n,k)$ for the maximum size of a $k$-deletion code of length $n$ over the fixed alphabet $[\alpha]$.\footnote{When $\alpha=2$, we will sometimes instead work over the ``usual'' binary alphabet $\{0,1\}$; this will not cause any confusion.}

\begin{theorem}\label{thm:main}
If $n \ge k \ge 2$ and $\alpha\ge 2$, then $D_\alpha(n,k)\ge \Omega_{\alpha, k}(\alpha^n \log  n/n^{2k})$.
\end{theorem}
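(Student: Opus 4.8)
The plan is to realize $D(n,k)$ as the independence number of a \emph{conflict graph} $G = G_{n,k}$ on vertex set $\{0,1\}^n$, where $u\sim v$ exactly when $\LCS(u,v)\ge n-k$ (i.e.\ $\dist(u,v)\le k$), and then to beat the greedy/Gilbert--Varshamov bound $\alpha(G)\ge N/(\Delta+1)$ by exploiting local sparsity of $G$, exactly in the spirit of Jiang and Vardy. Concretely, I would invoke the standard Ajtai--Koml\'{o}s--Szemer\'{e}di--type estimate on independent sets in locally sparse graphs (the one used by Jiang--Vardy): if an $N$-vertex graph has maximum degree at most $\Delta$ and the neighborhood of every vertex spans at most $\Delta^2/f$ edges, then $\alpha(G)=\Omega\!\big(N\log f/\Delta\big)$. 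Here $N=2^n$, so it suffices to establish (i) $\Delta = O_k(n^{2k})$ and (ii) every neighborhood of $G$ spans only $O_k(n^{4k}/f)$ edges with $f=n^{\Omega_k(1)}$; then $\log f = \Omega(\log n)$ and Theorem~\ref{thm:main} follows.

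Step (i) is the classical deletion-ball estimate: a word $w$ with $\LCS(u,w)\ge n-k$ is obtained from some subsequence $s$ of $u$ with $|s|\ge n-k$ by inserting at most $k$ symbols, and $\sum_{j\le k}\binom{n}{j}\binom{n}{j}2^j = O_k(n^{2k})$ bounds the number of such $w$; this holds for every $u$, giving $\Delta = O_k(n^{2k})$. This step is routine.

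Step (ii) is the crux. An edge inside $N(u)$ is a pair $v,w$ with $u,v,w$ pairwise at deletion distance $\le k$, so it suffices to bound $\sum_{v\in N(u)}|N(v)\cap N(u)|$ by $O_k(n^{4k-1})$ for every $u$. I would stratify $v\in N(u)$ by $d:=\dist(u,v)\in\{1,\dots,k\}$: there are $O_k(n^{2d})$ such $v$, and the problem reduces to understanding how the radius-$k$ deletion balls around $u$ and around $v$ intersect when $\dist(u,v)=d$. The heuristic is that $u$ and $v$ differ only ``locally'' near $d$ edit sites, that a word $w$ close to both must essentially agree with $u$ (hence with $v$) away from those sites, and that this forces a saving over the trivial bound $|N(v)\cap N(u)|\le O_k(n^{2k})$; one should also check that the few pairs $(u,v)$ for which no such saving occurs are too rare to matter. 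For $k\ge 2$ a single saved factor of $n$ is all that is needed, since $\sum_{d\le k} n^{2d}\cdot n^{2k-1} = O_k(n^{4k-1})$, which is $\Delta^2/\Omega_k(n)$; so we may take $f=\Omega_k(n)$.

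The main obstacle — and the reason the paper develops auxiliary results on longest common subsequences and shortest common supersequences — is making the intersection estimate in Step (ii) precise. The natural route is to parametrize $w\in N(u)\cap N(v)$ through a longest common subsequence $a$ of $(u,w)$ and a longest common subsequence $b$ of $(v,w)$: since $a,b$ are long subsequences of $w$ they share a common subsequence of length $\ge n-2k$, and since $a\subseteq u$, $b\subseteq v$ with $\dist(u,v)=d$ one gets extra constraints tying $a$ and $b$ to the $d$ edit sites separating $u$ from $v$. The delicate point is that the number of admissible choices of $(a,b)$ — equivalently, control over how many distinct longest common subsequences and shortest common supersequences a pair of length-$n$ words at a prescribed deletion distance can have, and how these are positioned — must be strong enough to beat $n^{2k}$ after the ball counts are multiplied together. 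Proving those counting bounds, and verifying that they survive the passage from two words to three pairwise-close words, is where essentially all the work lies; everything outside Step (ii) is bookkeeping.
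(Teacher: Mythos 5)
Your overall architecture coincides with the paper's: realize $D(n,k)$ as $\alpha(\Gamma_{n,k})$, bound $\Delta=O_k(n^{2k})$ exactly as you do in Step (i), and win a $\log n$ factor from local sparsity via an Ajtai--Koml\'os--Szemer\'edi--type lemma (the paper uses Bollob\'as's version, stated in terms of the \emph{total} number of triangles $T$ rather than a uniform per-neighborhood edge bound). But the entire content of the theorem lives in your Step (ii), and there you have only a heuristic: the claim that $|N(u)\cap N(v)|$ must lose a factor of $n$ over the trivial $O_k(n^{2k})$ because ``$u$ and $v$ differ only locally near $d$ edit sites'' is precisely the statement that needs proof, and it is simply false for arbitrary $u,v$ --- the edits between two close words need not be localizable at all (think of words with long periodic stretches, where a single deletion can be ``realized'' at linearly many positions). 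The paper's resolution is to restrict to $\lambda$-nonrepeating words with $\lambda=\Theta_k(\log n)$ (almost all words qualify, and the exceptional ones contribute $o(2^n)$ triangles in total --- which is also why the total-triangle form of the sparsity lemma is used instead of your ``every neighborhood spans at most $\Delta^2/f$ edges'' form, which is not established for the exceptional vertices). For nonrepeating words one proves the key combinatorial lemma: if $v=f_{I,T}(u)$ and $I$ has at least $2k+1$ $\lambda$-isolated positions, then $\dist(u,v)>k$. Applied to the concatenated operation sequence of length $\le 4k$ taking $u\to w\to v$, the condition $\dist(u,v)\le k$ forces all but $O(k)$ of the edit positions to cluster within distance $O(\lambda)$ of one another, so the positions are chosen in only $n^{3k}\lambda^{k}$ ways instead of $n^{4k}$, giving $T=O_k(2^n n^{3k}(\log n)^k)$.

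Separately, the route you propose for making Step (ii) precise --- parametrizing $w$ by an LCS of $(u,w)$ and an LCS of $(v,w)$ and invoking bounds on LCS/SCS multiplicities --- is exactly the approach the authors describe as belonging to ``earlier versions'' of their proof and which they do \emph{not} use: the multiplicity bound $\ml(u,v)\le\binom{2d}{d}$ counts distinct LCS \emph{words}, not their embeddings, and gives no control on the number of admissible $w$ without additional positional information. So as written the proposal identifies the correct strategy and correctly isolates where the difficulty is, but the decisive lemma (the isolation/clustering argument for pseudorandom words, or any substitute for it) is missing, and the substitute you gesture at does not obviously close the gap.
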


Our proof is nonconstructive: We reduce the problem of finding large codes to the problem of finding a large independent set in the associated \emph{$k$-deletion graph} $\Gamma_{n,k,\alpha}$.  The graph $\Gamma_{n,k,\alpha}$ has vertex set $[\alpha]^n$, and two words are connected by an edge if they have a common subsequence of length 
at least $n-k$. We show that $\Gamma_{n,k,\alpha}$ is locally sparse, that is, contains few triangles.  Theorem~\ref{thm:main} then follows from standard lemmas about the independence number of locally sparse graphs. A similar application of local sparsity to coding theory appears in the work of Jiang and Vardy~\cite{JiVa}, who obtained the first asymptotic improvements on the Gilbert--Varshamov bounds.

Counting triangles is harder for the graph $\Gamma_{n,k,\alpha}$ than it is for the extremely symmetric setting studied by Jiang and Vardy, where the analogous graph is just a power of the Hamming cube. In contrast, $\Gamma_{n,k,\alpha}$ is not even regular. In order to overcome these difficulties, we restrict our attention to ``pseudorandom'' words in the graph that are related by ``pseudorandom'' sequences of insertion and deletion operations, for suitable notions of pseudorandomness.

In this work we also prove additional results about the number of longest common subsequences and shortest common supersequences of a pair of words, as a function of their lengths and deletion distance. These bounds, which were necessary in earlier versions of our proof of Theorem~\ref{thm:main}, are of independent interest and may be useful for future study of deletion codes and the structure of the graphs $\Gamma_{n,k,\alpha}$.

We denote the length of a word $u$ by $|u|$.  We say that the word $w$ is a \emph{subsequence} of the word $u$ if $w$ can be obtained from $u$ by deleting some of the letters of $u$.  If $w$ is a subsequence of the words $u$ and $v$, then we say that $w$ is a \emph{common subsequence} of $u$ and $v$; further, $w$ is a \emph{longest common subsequence} (or \emph{LCS}) of $u$ and $v$ if it is a common subsequence of maximum length.  We let $\LCS(u,v)$ denote the length of an LCS of $u$ and $v$.  If $u$ and $v$ are words of the same length $|u|=|v|=n$, then we define the \emph{deletion distance} between $u$ and $v$ to be $\dist(u,v):=n-\LCS(u,v)$.  One can define \emph{shortest common supersequences} (or \emph{SCS}'s) and the \emph{insertion distance} analogously.  It is well known that $\LCS(u,v)+\SCS(u,v)=|u|+|v|$ for all words $u,v$, so, in particular, deletion distance and insertion distance are identical; LCS's and SCS's are in this sense dual.

For words $u$ and $v$, we define the \emph{LCS multiplicity} $\ml(u,v)$ (respectively, \emph{SCS multiplicity} $\ms(u,v)$) to be the number of distinct LCS's (respectively, SCS's) of $u$ and $v$.  The following simple inequality relating LCS and SCS multiplicity is probably known to experts, but we could not locate a reference in the literature.

\begin{proposition}\label{prop:multiplicity-ineq}
For all words $u,v$, we have $\ml(u,v)\leq \ms(u,v)$.
\end{proposition}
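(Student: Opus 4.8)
The plan is to prove $\ml(u,v)\le\ms(u,v)$ by induction on $|u|+|v|$, using the standard recursion for longest common subsequences together with the duality $\SCS(x,y)=|x|+|y|-\LCS(x,y)$. If $u$ or $v$ is empty, then the only LCS is the empty word and the only SCS is the other word, so $\ml(u,v)=\ms(u,v)=1$. For the inductive step, write $u=u'a$ and $v=v'b$ with $a,b$ the final letters, and distinguish the cases $a=b$ and $a\ne b$. Throughout I will freely use that deleting a letter from a word changes its LCS with a fixed word by $0$ or $1$, and that if $x$ is a subsequence of $x'$, then any common subsequence (resp.\ supersequence) for $(x,y)$ serves also for $(x',y)$.

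\emph{Case $a=b$.} Every LCS $w$ of $(u,v)$ must end in $a$, since otherwise $wa$ (obtained by appending the common final letter of $u$ and $v$) would be a strictly longer common subsequence. Writing $w=w'a$, the prefix $w'$ is then a common subsequence of $u'$ and $v'$, and in fact an LCS of $(u',v')$ (any common subsequence $z$ of $(u',v')$ yields the common subsequence $za$ of $(u,v)$, of length $|z|+1$); conversely, appending $a$ to any LCS of $(u',v')$ gives an LCS of $(u,v)$. Hence $\ml(u,v)=\ml(u',v')$. Dually, $s'\mapsto s'a$ sends each SCS of $(u',v')$ to a common supersequence of $(u,v)$ of length $\SCS(u',v')+1=\SCS(u,v)$, hence an SCS, and this map is injective, so $\ms(u,v)\ge\ms(u',v')$. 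The inductive hypothesis gives $\ml(u,v)=\ml(u',v')\le\ms(u',v')\le\ms(u,v)$.

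\emph{Case $a\ne b$.} Set $\ell:=\LCS(u,v)=\max(\LCS(u',v),\LCS(u,v'))$ and let $S$ be the set of those pairs among $(u',v)$ and $(u,v')$ whose LCS length equals $\ell$, so $1\le|S|\le 2$. For an LCS $w$ of $(u,v)$, fix an embedding into $u$: if it misses the last letter of $u$, then $w$ is a length-$\ell$ common subsequence of $(u',v)$, hence an LCS of $(u',v)$ and $(u',v)\in S$; if it uses the last letter of $u$, then $w$ ends in $a\ne b$, so any embedding of $w$ into $v$ misses the last letter of $v$, making $w$ an LCS of $(u,v')$ with $(u,v')\in S$. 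Therefore $\ml(u,v)\le\sum_{(x,y)\in S}\ml(x,y)$. On the SCS side, for each $(x,y)\in S$, appending to an SCS of $(x,y)$ the corresponding deleted letter (the last letter of $u$ or of $v$) produces a common supersequence of $(u,v)$ of length $\SCS(x,y)+1=\SCS(u,v)$, i.e.\ an SCS of $(u,v)$; the at most two resulting families end in distinct letters and so are disjoint, giving $\ms(u,v)\ge\sum_{(x,y)\in S}\ms(x,y)$. Applying the inductive hypothesis to each element of $S$ now yields $\ml(u,v)\le\sum_{(x,y)\in S}\ml(x,y)\le\sum_{(x,y)\in S}\ms(x,y)\le\ms(u,v)$.

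The one point that needs care is the sub-case $a\ne b$ with $|S|=2$, i.e.\ $\LCS(u',v)=\LCS(u,v')=\ell$: here the two sets of LCS's of $(u,v)$ genuinely overlap — their intersection turns out to be exactly the set of LCS's of $(u',v')$ — so one should resist trying to correct for the intersection and instead use only the crude union bound $\ml(u,v)\le\ml(u',v)+\ml(u,v')$, while still extracting the two disjoint families of shortest supersequences for the matching lower bound on $\ms(u,v)$. That this union bound is nevertheless enough for the induction to close is the heart of the argument; the remaining verifications (that the words we construct really are LCS's resp.\ SCS's) reduce to the identity $\SCS=|{\cdot}|+|{\cdot}|-\LCS$ and the elementary monotonicity facts above.
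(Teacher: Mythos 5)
Your induction is correct. I checked the two cases: when the last letters agree, the bijection $w\mapsto w'$ (strip the final letter) between LCS's of $(u,v)$ and of $(u',v')$ and the injection $s'\mapsto s'a$ on the SCS side both work, with the length bookkeeping $\SCS(u,v)=\SCS(u',v')+1$ verifying that the image really consists of shortest supersequences; when the last letters differ, the union bound $\ml(u,v)\le\sum_{(x,y)\in S}\ml(x,y)$ and the disjointness of the two appended families (they end in $a\ne b$) giving $\ms(u,v)\ge\sum_{(x,y)\in S}\ms(x,y)$ fit together exactly, so the induction closes. This is, however, a genuinely different argument from the one in the paper. The paper's proof is non-inductive: it constructs a single explicit injection $\varphi$ from the set of LCS's to the set of SCS's by merging $u$ and $v$ along the leftmost embeddings $\Left(u,w)$ and $\Left(v,w)$ of a given LCS $w$, and then shows $w$ is recoverable from $\varphi(w)$ by reading off the positions where the merge consumed a letter from both words. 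The explicit injection is arguably more informative (it exhibits, for each LCS, a canonical witnessing SCS, and is the kind of construction one might hope to refine), whereas your dynamic-programming induction is more elementary and self-contained, needing only the standard last-letter recursions for $\LCS$ and the duality $\SCS=|u|+|v|-\LCS$. The price of your route is that the $a\ne b$ case proves only the numerical inequality via a union bound rather than producing an injection; as you note, the two LCS families can overlap, so unwinding your induction into an explicit map would require extra care. Both proofs are complete and correct.
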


Our main result on LCS and SCS multiplicity is the following.

\begin{theorem}\label{thm:multiplicity-upper-bound}
Let $n, a, b$ be natural numbers with $n \geq a+b$.  If $u$ and $v$ are words with lengths $n-a$ and $n-b$ (respectively) and $\SCS(u,v) = n$ (equivalently, $\LCS(u,v) =n-a-b$), then $$\ml(u,v) \leq \ms(u,v) \leq \binom{a+b}{a}.$$
\end{theorem}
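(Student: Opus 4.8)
The first inequality, $\ml(u,v)\le\ms(u,v)$, is exactly Proposition~\ref{prop:multiplicity-ineq}, so the task reduces to proving $\ms(u,v)\le\binom{a+b}{a}$. The plan is to induct on $|u|+|v|=2n-a-b$, peeling off the first letters of $u$ and $v$ and using Pascal's identity $\binom{a+b}{a}=\binom{a+b-1}{a}+\binom{a+b-1}{a-1}$ to close the loop. (Nothing in the argument is special to the binary alphabet, so this also handles the remark that follows Theorem~\ref{thm:main}.)

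For the base case, observe that if $a=0$ then $u$ is itself a common supersequence of $u$ and $v$ of length $n=\SCS(u,v)$; since every SCS has length $n$ and contains $u$ as a subsequence, it must equal $u$, so $\ms(u,v)=1=\binom{b}{0}$, and the case $b=0$ (which subsumes the cases where $u$ or $v$ is empty) is symmetric. So assume $a,b\ge 1$, whence $u=u_1u'$ and $v=v_1v'$ are nonempty. The engine of the induction is the following reduction lemma: if $w=w_1\cdots w_n$ is any SCS of $u$ and $v$ and $w_1=u_1$, then $w':=w_2\cdots w_n$ is an SCS of $u'$ and $v$, and moreover $\SCS(u',v)=n-1$, so the reduced pair again satisfies the hypothesis of the theorem, now with parameters $(n-1,a,b-1)$. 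To see $w'$ is a common supersequence of $u'$ and $v$: starting from any embedding of $u$ into $w$ one may re-route so that $u_1$ occupies position $1$ (legal since $w_1=u_1$), whence $u'$ embeds into $w'$; and $v$ embeds into $w'$ because no embedding of $v$ need use position $1$ (automatic if $v_1\ne u_1$, and otherwise one re-routes as before). To see $\SCS(u',v)=n-1$: any common supersequence of $u'$ and $v$ of length $\le n-2$ would, after prepending $u_1$, give a common supersequence of $u$ and $v$ of length $<n$, contradicting $\SCS(u,v)=n$. The analogous statement holds with the roles of $u$ and $v$ (and of $a$ and $b$) interchanged, and in the matching case $u_1=v_1=c$ one gets that $w':=w_2\cdots w_n$ is an SCS of $u'$ and $v'$ with parameters $(n-1,a,b)$.

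With this in hand the two cases are quick. If $u_1=v_1=c$, then every SCS $w$ begins with $c$ — otherwise $u$ and $v$ would both embed into $w_2\cdots w_n$, forcing $\SCS(u,v)\le n-1$ — so $w\mapsto w_2\cdots w_n$ injects the SCS's of $(u,v)$ into those of $(u',v')$; here $n-1\ge a+b$ because the shared letter $c$ witnesses $\LCS(u,v)\ge 1$, so the inductive hypothesis applies and $\ms(u,v)\le\ms(u',v')\le\binom{a+b}{a}$. If instead $u_1\ne v_1$, the same reasoning shows every SCS $w$ begins with $u_1$ or with $v_1$; the reduction lemma injects the first class into the SCS's of $(u',v)$ and the second into the SCS's of $(u,v')$, which by induction have sizes at most $\binom{a+b-1}{a}$ and $\binom{a+b-1}{a-1}$ respectively, so Pascal's identity gives $\ms(u,v)\le\binom{a+b}{a}$.

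The only genuinely delicate point is the reduction lemma — in particular the claim that $w'=w_2\cdots w_n$ is not merely a common supersequence but a \emph{shortest} one of the peeled pair, together with the re-routing argument showing the peeled words actually embed into $w'$. Everything else is bookkeeping: verifying that the reduced triples still satisfy $n'\ge a'+b'$ (which is where the matching case needs $\LCS(u,v)\ge 1$) and that $|u|+|v|$ strictly decreases, so the induction remains well-founded even when $u'$ or $v'$ is empty.
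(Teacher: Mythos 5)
Your proof is correct and takes essentially the same route as the paper's Lemma~\ref{lem:min-common-supersequences}: peel a first letter of $u$ or of $v$ and apply Pascal's identity, with the only cosmetic difference that the paper recurses on \emph{minimal} common supersequences (which sidesteps having to check that the peeled word is still \emph{shortest}) and strips the whole common prefix at once, whereas you work with SCS's directly and verify shortest-ness is preserved. One small blemish: your reduction lemma as stated is false when $u_1=v_1$ (e.g.\ $u=v=01$, $w=01$, $w'=1$ is not a supersequence of $v$), and the parenthetical ``otherwise one re-routes as before'' does not repair it, since re-routing forces $v_1$ to occupy position $1$ rather than avoid it; this is harmless because you only invoke that lemma when $u_1\ne v_1$ and handle the matching case with the correct both-peeled variant, but the lemma should carry the hypothesis $u_1\ne v_1$.
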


 The $a=b$ case can be phrased symmetrically as follows: If $u,v$ are words of equal length with $\dist(u,v)=d$, then we have $$\ml(u,v)\leq \ms(u,v) \leq \binom{2d}{d},$$ independent of the lengths of $u,v$.  We also prove in the appendix that this theorem is tight in that for all choices of $a$ and $b$ and all sufficiently large $n$ (in terms of $a,b$), there exists a pair of words $u,v$ for which equality is attained in both inequalities.

The paper is organized as follows. We prove the main result Theorem~\ref{thm:main} in Section~\ref{sec:main}; we prove Proposition~\ref{prop:multiplicity-ineq} and Theorem~\ref{thm:multiplicity-upper-bound} in Section~\ref{sec:upper-extremal}; finally, we describe a family of pairs of words which attain equality in Theorem~\ref{thm:multiplicity-upper-bound} in the appendix.

We use standard asymptotic notation, as follows.  If $f(n), g(n): \mathbb{N}\to \mathbb{R}$ are functions, then we write $f=O(g)$ to indicate that there is some constant $C>0$ such that $|f(n)|\leq Cg(n)$ for all natural numbers $n$.  If $g$ is nonnegative, then we write $f=\Omega(g)$ to indicate that $g=O(f)$.  We write $f=\Theta(g)$ if $f=O(g)$ and $g=O(f)$.  Subscripts on $O, \Omega, \Theta$ indicate that the implied constants $C$ may depend on the subscripted parameters. All logarithms are base-$2$.

\section{Proof of Theorem~\ref{thm:main}} \label{sec:main}

In this section we prove Theorem~\ref{thm:main} in two steps: We reduce the problem to counting triangles in the $k$-deletion graph $\Gamma_{n,k,\alpha}$, and then we approximate this triangle count.  Observe that $D_\alpha(n,k)$ is by definition the independence number of $\Gamma_{n,k,\alpha}$. We need the following standard lemma of Ajtai, Koml\'os, and Szemer\'edi~\cite{AjKoSz} on independence numbers of graphs with few triangles.  See also Shearer~\cite{Sh} for a simpler argument and \cite[pp. 336-337]{AS} for a very short proof. This line of work has led to several important developments in extremal graph theory and Ramsey theory.

\begin{lemma}[\cite{AjKoSz}, Lemma 5]\label{lem:sparse-neighborhoods}
For any $\varepsilon>0$ and any graph $G$ on $N\ge 1$ vertices with average degree $d$ containing $T < N d^{2-\varepsilon}$ triangles, we have $\alpha(G) \ge \Omega_\varepsilon((N/d)\log d).$
\end{lemma} 

It follows easily that we can replace average degree $d$ by maximum degree $\Delta$ in the lemma above, and this is the form we will use. The graph $\Gamma_{n,k,\alpha}$ has $N=[\alpha]^n$ vertices and maximum degree $\Delta = O_{\alpha, k}(n^{2k})$, since from any given vertex $u\in [\alpha]^n$, a neighbor $v$ can be obtained by choosing $k$ letters of $u$ to delete in at most $\binom{n}{k}$ ways and then $k$ letters to insert in at most $\binom{n}{k} \alpha^k$ ways. Thus, if we want to use Lemma~\ref{lem:sparse-neighborhoods} to prove that $D_\alpha(n,k) =\Omega_{\alpha,k}(\alpha^n \log n/n^{2k})$, it suffices to show that the number of triangles in $\Gamma_{n,k,\alpha}$ is $O_{\alpha, k}(\alpha^n n^{4k-\varepsilon})$ for some $\varepsilon >0$.

We will actually prove the sharper bound that $\Gamma_{n,k,\alpha}$ has $O_{\alpha, k}(\alpha^n n^{3k} (\log n)^k)$ triangles.  This estimate is tight up to the logarithmic factor. It will be convenient to focus our attention on ``pseudorandom'' words, as follows. 
If $u \in [\alpha]^n$ is a word of length $n$ and $S \subseteq [n]$ is a subset, 
let $u_S$ denote the subword of $u$ indexed by $S$.  
If $I=[x,y]$ is an interval, then we call $u_{I}$ a \emph{subinterval} of
$u$.  For $1 \leq \lambda \leq n$, we say that $u\in[\alpha]^n$ is \emph{$\lambda$-nonrepeating} if $u_I \neq u_{J}$ for all pairs of distinct intervals $I,J \subseteq [n]$ of length $\lambda$; $u$ is \emph{$\lambda$-repeating} otherwise. By the first-moment method, if $\lambda > (2+\varepsilon) \log n$ for some $\varepsilon > 0$, then almost all words of length $n$ are $\lambda$-nonrepeating (see the proof of Lemma~\ref{lem:triangles} for the formal proof of this fact).

Next we introduce notation for a sequence of insertion and deletion operations. Let $u\in [\alpha]^n$, $t\in \{\del,\ins_1,\ins_2,\ldots, \ins_\alpha\}$ and $i \in [0,n]$, where $i$ is not allowed to be $0$ if $t=\del$ (since the $0$-th letter of $u$, which does not exist, cannot be deleted). We write $f_{i, t}(u)$ for the word obtained from $u$ by deleting $u_i$, if $t = \del$, inserting an $x$ after $u_i$, if $t = \ins_x$. Here, ``inserting after $u_0$'' means inserting before $u_1$.

\begin{definition}
Fix nonnegative integers $n$ and $\ell$.  Let $I = (i_\ell, i_{\ell-1},\ldots, i_1)$ be a nonincreasing sequence of nonnegative integers $n \ge i_\ell \ge i_{\ell-1} \ge \ldots \ge i_1 \ge 0 $, and let $T = (t_\ell, \ldots, t_1) \in \{\del,\ins_1,\ins_2,\ldots, \ins_\alpha\}^\ell$ be a sequence of insertion/deletion types.  We further 
require that if $t_j=\del$ then $i_j \ne 0$ 
(the $0$-th letter of $u$ cannot be deleted) and $i_{j-1} < i_j$ (we do not operate on an already-deleted letter).  We then call the pair $(I,T)$ a \emph{sequence of $\ell$ insertions and deletions}, and we write
$$f_{I,T}(u):=(f_{i_1,t_1} \circ f_{i_2,u_2}\circ \cdots \circ f_{i_\ell, t_\ell})(u)$$
for the composition of the operations $f_{i_\ell, t_\ell}$ through $f_{i_1,t_1}$ applied to a word $u\in [\alpha]^n$.
\end{definition}

Whenever one obtains a word $v$ from $u$ by inserting and deleting letters, one can reorder these operations to find a sequence $(I,T)$ of insertions and deletions such that $v=f_{I,T}(u)$. Note that, because the elements of $I$ are nonincreasing, an earlier operation cannot shift the location of a later operation. In particular, $i_j$ is not only the position in $(f_{i_{j+1},t_{j+1}}\circ \cdots \circ f_{i_\ell, t_\ell})(u)$ at which the operation $f_{i_j,t_j}$ is applied, but also the original position in $u$ at which the operation occurs. This lets us refer unambiguously to the ``position'' $i_j$ in $u$ of each operation $f_{i_j, t_j}$.

We say that an element $i$ of a set $I\subseteq [0,n]$ is \emph{$\lambda$-isolated} if $\lambda<i<n-\lambda$ and no other element $j\in I$ satisfies $|j-i|\le 2\lambda$.  We are now ready to prove our key lemma.

\begin{lemma}\label{lem:isolated}
Let $n,k, \lambda \ge 1$, and let $u,v\in [\alpha]^n$ be $\lambda$-nonrepeating words such that $v=f_{I,T}(u)$ for some sequence of operations $(I,T)$.  If the number of $\lambda$-isolated elements of $I$ is at least $2k+1$, then $\dist(u,v) > k$.
\end{lemma}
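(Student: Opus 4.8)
\emph{Proof sketch.}
The plan is to argue by contradiction. Suppose $\dist(u,v)=d\le k$. Since an LCS of $u$ and $v$ has length $n-d$, one can delete $d$ letters from $u$ to reach a common subsequence of length $n-d$ and then insert $d$ letters to reach $v$; applying the reordering remark following the definition to these $2d\le 2k$ operations yields a sequence $(I',T')$ of at most $2k$ insertions and deletions with $v=f_{I',T'}(u)$. My plan is then to show that every $\lambda$-isolated element $i\in I$ forces $(I',T')$ to contain an operation whose position (in $u$) lies in the interval $[i-\lambda,i+\lambda]$. Since any two distinct $\lambda$-isolated elements of $I$ differ by more than $2\lambda$, these intervals are pairwise disjoint, so $(I',T')$ would have to contain at least $2k+1$ operations -- contradicting $|I'|\le 2k$. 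Hence $\dist(u,v)>k$.

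The heart of the matter is this local forcing claim, which I would also prove by contradiction: fix a $\lambda$-isolated $i\in I$ and suppose $(I',T')$ has \emph{no} operation at a position in $[i-\lambda,i+\lambda]$. First, because $i$ is $\lambda$-isolated, the only operation of $(I,T)$ within distance $2\lambda$ of $i$ is the one at $i$ itself, and the letters $u_{i-\lambda},\dots,u_{i+\lambda}$ (which all exist since $\lambda<i<n-\lambda$) are otherwise untouched by $(I,T)$; hence $v=f_{I,T}(u)$ contains as a contiguous substring the word $P$ obtained from $u_{i-\lambda}\cdots u_{i+\lambda}$ by applying just the operation at $i$, namely $P=u_{i-\lambda}\cdots u_{i-1}\,u_{i+1}\cdots u_{i+\lambda}$ in the $\del$ case and $P=u_{i-\lambda}\cdots u_i\,c\,u_{i+1}\cdots u_{i+\lambda}$ (with $c\in\{0,1\}$) in the $\inszero/\insone$ case. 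In either case $P$ has length-$\lambda$ prefix $u_{i-\lambda}\cdots u_{i-1}$ and length-$\lambda$ suffix $u_{i+1}\cdots u_{i+\lambda}$. Second, since $(I',T')$ has no operation at a position in $[i-\lambda,i+\lambda]$, the stretch $u_{i-\lambda}\cdots u_{i+\lambda}$ survives contiguously into $v=f_{I',T'}(u)$ as a substring $Q$ of length $2\lambda+1$ with the same length-$\lambda$ prefix and suffix as $P$.

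Now I would play $P$ off against $Q$ inside $v$, using that $v$ is $\lambda$-nonrepeating. The length-$\lambda$ word $u_{i-\lambda}\cdots u_{i-1}$ occurs at a unique position in $v$ and is the prefix of both the given occurrence of $P$ and the given occurrence of $Q$, so those two occurrences begin at a common position, say $p$. Likewise $u_{i+1}\cdots u_{i+\lambda}$ occurs at a unique position in $v$ and is the length-$\lambda$ suffix of both occurrences; but the suffix of the occurrence of $P$ begins at $p+(|P|-\lambda)$ while that of $Q$ begins at $p+(|Q|-\lambda)$, and $|P|\ne|Q|$ (indeed $|Q|=2\lambda+1$ while $|P|\in\{2\lambda,2\lambda+2\}$). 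This contradicts the uniqueness of the occurrence of $u_{i+1}\cdots u_{i+\lambda}$ in $v$, which establishes the local claim (and hence the lemma), modulo the bookkeeping discussed next.

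The step I expect to be the real obstacle is exactly this local forcing claim: one must track carefully which subinterval of $u$ survives contiguously into $v$ under each of $(I,T)$ and $(I',T')$, keep the single-letter index shifts straight across the three operation types, and handle the boundary case where $i$ is within $2\lambda$ (but more than $\lambda$) of an end of $u$, so that only a one-sided window of width $\lambda$ is available on that side. It also deserves a careful check that ``no operation at a position in $[i-\lambda,i+\lambda]$'' -- with insertions indexed by the position they follow -- genuinely keeps $u_{i-\lambda}\cdots u_{i+\lambda}$ contiguous in $v$, and that the intervals charged to distinct $\lambda$-isolated elements really do remain disjoint once these indexing conventions are pinned down.
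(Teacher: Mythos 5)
Your proposal is correct and follows essentially the same route as the paper: both arguments pigeonhole the at most $2k$ operations of $(I',T')$ against the $2k+1$ pairwise disjoint windows $[i-\lambda,i+\lambda]$ around the isolated elements, and both derive the local contradiction from the fact that the unique occurrences (by $\lambda$-nonrepetition of $v$) of the flanking length-$\lambda$ words $u_{[i-\lambda,i-1]}$ and $u_{[i+1,i+\lambda]}$ are separated by $0$ or $2$ letters according to $(I,T)$ but by exactly $1$ letter according to $(I',T')$ — your $|P|\ne|Q|$ comparison is just this count restated. The boundary worry you flag is vacuous, since the definition of $\lambda$-isolated already requires $\lambda<i<n-\lambda$.
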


\begin{proof}
We may pick $2k+1$ of the $\lambda$-isolated terms of $I$ and call them $j_{2k+1}>j_{2k}>\cdots > j_1$; let the corresponding terms of $T$ be $t_{2k+1},\ldots, t_1$. Note that since the operations of $(I,T)$ are applied in decreasing order of index, these operations $f_{j_s,t_s}$ are applied in decreasing order of $s$ as well. For each $\lambda < j < n-\lambda$, write $L(j):= [j-\lambda, j-1]$ and $R(j):= [j+1, j+\lambda]$ for the length-$\lambda$ intervals in $[n]$ immediately to the left and right of $j$. The definition of $\lambda$-isolation implies that the $4k+2$ intervals 
\[
L(j_1),R(j_1),L(j_2),R(j_2),\ldots,L(j_{2k+1}),R(j_{2k+1})
\]
are pairwise disjoint intervals of length $\lambda$. Moreover, no insertion or deletion operations occur in any of the corresponding $4k+2$ subintervals of $u$ (since the $\lambda$-isolation assumption ensures that the $j_i$'s are far apart). Since $u$ and $v$ are $\lambda$-nonrepeating, each of these $4k+2$ words appears exactly once as a subinterval of $u$ and once as a subinterval of $v$.

The key observation is that when we apply $f_{j_s,t_s}$, we either insert or delete a single letter between $u_{L(j_s)}$ and $u_{R(j_s)}$. Inside $u$, these two subintervals appear with exactly one letter between them, and no other insertion or deletion operations happen nearby. Thus, $u_{L(j_s)}$ and $u_{R(j_s)}$ appear in $v$, and the number of letters in $v$ between the unique appearances of $u_{L(j_s)}$ and $u_{R(j_s)}$ is either $0$ (if a letter was deleted by $f_{j_s,t_s}$) or $2$ (if a letter was inserted by $f_{j_s,t_s}$).

Assume for the sake of contradiction that $\dist(u,v) \le k$, and let $(I',T')$ be a sequence of at most $k$ insertions and at most $k$ deletions such that $v = f_{ I', T'}(u)$. Since $|I'| \le 2k$, there exists some $1\le s\le 2k+1$ for which $I'$ is disjoint from the entire length-$(2\lambda+1)$ subinterval $[j_s-\lambda, j_s+\lambda]$. It follows that $u_{[j_s-\lambda, j_s+\lambda]}$ appears unaltered as a subinterval of $v$, and in particular the unique copy of $u_{L(j_s)}$ in $v$ and the unique copy of $u_{R(j_s)}$ in $v$ have exactly one letter between them. This contradicts the key observation in the previous paragraph, so we conclude that $\dist(u,v) > k$, as desired.
\end{proof}

The next lemma lets us upper-bound the number of triangles in $\Gamma_{n,k,\alpha}$ and, more generally, the number of triples $(u,v,w) \in ([\alpha]^n)^3$ with prescribed values of $\dist(u,v), \dist(v,w), \dist(w,u)$.

\begin{lemma}\label{lem:triangles}
Let $n\ge a \ge b\ge c \ge 1$.  The number of triples $(u,v,w) \in ([\alpha]^n)^3$ with $\dist(u,v) \le a$, $\dist(v,w) \le b$, and $\dist(w,u) \le c$ is $O_{a,\alpha}(\alpha^n n^{a+b+c} (\log n)^{b+c-a})$.
\end{lemma}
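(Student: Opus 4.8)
The plan is to iterate over the word $w$, which is the common endpoint of the two edges with the smaller bounds $b$ and $c$, and to encode the triple $(u,v,w)$ by $w$ together with two edit sequences emanating from it. First I would fix $\lambda=\lceil C_a\log n\rceil$ for a sufficiently large constant $C_a$ depending on $a$, and discard all triples in which at least one of $u,v,w$ is $\lambda$-repeating: the number of $\lambda$-repeating words in $\{0,1\}^n$ is $O(2^n n^2 2^{-\lambda})$ by a first-moment union bound over pairs of length-$\lambda$ intervals, so the number of discarded triples is at most $3\cdot O(2^n n^2 2^{-\lambda})\cdot O_a(n^{2a})\cdot O_a(n^{2b})=o(2^n)$ once $C_a>2+2a+2b$, which is negligible against the claimed bound. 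From now on $u,v,w$ are all $\lambda$-nonrepeating. For such a triple, since $\dist(w,u)\le c$ and $\dist(v,w)\le b$, we may fix (minimal) valid edit sequences $A$ with $u=f_A(w)$ and $B$ with $v=f_B(w)$, having at most $2c$ and $2b$ operations respectively; the triple is then recovered from the triple $\big(w,A,B\big)$. So it suffices to bound, for each fixed $w$, the number of pairs $(A,B)$ that can arise.

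The heart of the argument is a two-sided analogue of Lemma~\ref{lem:isolated}: let $P$ be the multiset (in $[0,n]$, of size at most $2b+2c$) consisting of the positions appearing in $A$ together with those appearing in $B$; then \emph{if $\dist(u,v)\le a$, at most $2a$ elements of $P$ are $\lambda$-isolated} (an element of $P$ being $\lambda$-isolated if it has multiplicity one, lies strictly between $\lambda$ and $n-\lambda$, and no other element of $P$ is within $2\lambda$ of it). I would prove this by adapting the proof of Lemma~\ref{lem:isolated}. Suppose $j_{2a+1}>\cdots>j_1$ are $\lambda$-isolated elements of $P$. The $4a+2$ intervals $L(j_s),R(j_s)$ are pairwise disjoint and contain no operation of $A$ or of $B$, so, using that $w$, $u$, and $v$ are all $\lambda$-nonrepeating, each of the $4a+2$ words $w_{L(j_s)},w_{R(j_s)}$ occurs exactly once as a subinterval of $w$, exactly once in $u$, and exactly once in $v$. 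For each $s$, the unique operation of $P$ at position $j_s$ lies in exactly one of $A,B$; in $u$ the number of letters between the unique occurrences of $w_{L(j_s)}$ and $w_{R(j_s)}$ equals $0$ or $2$ if that operation belongs to $A$ and equals $1$ if it belongs to $B$, and symmetrically in $v$ with the roles of $A,B$ swapped. A valid edit sequence realizing $\dist(u,v)\le a$ has at most $2a$ operations, hence is disjoint from at least one of the $2a+1$ pairwise disjoint windows of $u$ containing $w_{L(j_s)}\cdots w_{R(j_s)}$; on that window $u$ and $v$ agree, which forces the two counts above (one computed in $u$, one in $v$) to be equal, a contradiction.

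It then remains to count, for fixed $w$, the pairs $(A,B)$ whose position multiset $P$ has at most $2a$ $\lambda$-isolated elements; the operation types and the bookkeeping of which index belongs to $A$ versus $B$ contribute only an $O_a(1)$ factor, so it suffices to count such multisets $P$. Let $Z\subseteq P$ be the set of $\lambda$-isolated elements, $|Z|\le 2a$; each contributes a factor $n+1$. Partition $P$ into clusters of diameter $O(\lambda)$ by grouping elements within distance $2\lambda$; a singleton cluster away from the boundary is exactly an element of $Z$, so every other cluster is either a near-boundary singleton (confined to $O(\log n)$ positions) or has size at least $2$, and there are at most $q:=(|P|-|Z|)/2\le b+c-a$ of the latter. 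Choosing an anchor position for each non-boundary cluster (a factor $n+1$) and then its remaining members (a factor $O(\log n)$ each, since the diameter is $O(\lambda)$), and $O(\log n)$ per near-boundary singleton, yields a total of $O_a\!\big(n^{|Z|+q}(\log n)^{|P|-|Z|-q}\big)$. Subject to $|Z|\le 2a$, $q\le(|P|-|Z|)/2$, $|P|\le 2b+2c$, the exponent $|Z|+q\le (|P|+|Z|)/2$ of $n$ is at most $a+b+c$, attained (when $a\le b+c$) at $|Z|=2a,\ |P|=2b+2c,\ q=b+c-a$, where the exponent of $\log n$ equals $b+c-a$; since for large $n$ this term dominates all others, the count is $O_a\!\big(n^{a+b+c}(\log n)^{b+c-a}\big)$ (when $a>b+c$ one has instead $|Z|\le|P|\le 2b+2c$, giving the bound $O_a(n^{2b+2c})\le O_a(n^{a+b+c}(\log n)^{b+c-a})$). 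Summing over $w$ gives $O_a\!\big(2^n n^{a+b+c}(\log n)^{b+c-a}\big)$, as required.

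The main obstacle I expect is the claim in the second paragraph: one must verify carefully that the words $w_{L(j_s)}$ and $w_{R(j_s)}$ survive the applications of $A$ and of $B$ as contiguous, uniquely-occurring subwords with the asserted letter counts, using only that no operation of either sequence lands within $2\lambda$ of any $j_s$. It is worth noting that trying to combine $A$ and $B$ into a single valid edit sequence $u\to v$ (by composing and reordering) is awkward, because reordering can force cancellations that erase the intermediate word $w$; working directly with the two sequences out of $w$ avoids this. The clustering count in the last paragraph is routine once the split into ``$n$-cost'' and ``$\log n$-cost'' positions is in place, but one should double-check that the optimization of $n^{|Z|+q}(\log n)^{|P|-|Z|-q}$ genuinely peaks at the stated exponents in both ranges of $a$ relative to $b+c$.
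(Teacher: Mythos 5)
Your proof is correct, and it follows the same overall strategy as the paper's: fix $\lambda=\Theta_a(\log n)$ and discard the negligibly many triples containing a $\lambda$-repeating word; encode each remaining triple by a base word together with two short edit sequences; use an isolation argument to conclude that at most $2a$ of the $\le 2b+2c$ operation positions are $\lambda$-isolated; and count the admissible position multisets by clustering positions that lie within $2\lambda$ of one another, paying a factor of $n$ per cluster anchor and $O(\log n)$ per additional cluster member. The one genuine difference is in how the isolation lemma is deployed. The paper anchors at $u$, merges the two sequences $u\to w\to v$ into a single sequence $u\to v$ of length at most $2b+2c$ (noting that only $O_a(1)$ pairs of sequences yield each merged sequence), and applies Lemma~\ref{lem:isolated} verbatim; you anchor at $w$, keep the two sequences $A\colon w\to u$ and $B\colon w\to v$ separate, and prove a two-sided analogue in which an isolated position touched by exactly one of $A,B$ forces the gap between the unique copies of $w_{L(j_s)}$ and $w_{R(j_s)}$ to be $0$ or $2$ in one of $u,v$ and exactly $1$ in the other, which is incompatible with $\dist(u,v)\le a$ by the same window-avoidance argument. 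Your variant costs you a reproof of the isolation lemma but spares you the bookkeeping of merging and re-sorting two operation sequences (including the cancellation issue you flag); both routes are sound and yield the same exponent bookkeeping, since $|Z|+q\le(|P|+|Z|)/2\le a+b+c$. One small point in your favor: in the clustering count you explicitly separate near-boundary singleton clusters, which are not $\lambda$-isolated but cost only $O(\log n)$ choices rather than $n$; the paper's assertion that there are at most $2a$ singleton equivalence classes quietly ignores these, and without your observation the bound of $a+b+c$ on the number of classes each costing a factor of $n$ would not quite follow as stated.
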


\begin{proof}
Say that a triple $(u,v,w) \in ([\alpha]^n)^3$ is \emph{good} if $\dist(u,v) \le a$, $\dist(v,w) \le b$, and $\dist(w,u) \le c$.  Note that $\dist(u,v) \leq \dist(v,w)+\dist(w,u)\leq b+c$ by the Triangle Inequality, so all good triples $(u,v,w)$ satisfy $\dist(u,v) \leq b+c$, and we may restrict our attention to the regime $a \leq b+c$.

Let $\lambda = 10a\log n$, and observe that the probability of a uniformly random $u\in [\alpha]^n$ being $\lambda$-repeating is at most $\binom{n}{2}n^{-10a} \le n^{-8a}$. Thus, the total number of such exceptional words is at most $\alpha^n n^{-8a}$. For each $u\in [\alpha]^n$, there are at most $O_a(n^{2a})$ words $v$ at distance at most $a$ and at most $O_a(n^{2c})$ words $w$ at distance at 
most $c$, so there are at most $O_a(n^{2a + 2c}) \le O_a(n^{4a})$ good triples $(u,v,w)$ for each choice of fixed $u$ (and likewise for each fixed choice of $v$ or $w$). We find that the total number of good triples containing a $\lambda$-repeating word is $\alpha^n n^{-8a}\cdot O_a(n^{4a}) = o(\alpha^n)$, which is negligible. It remains to bound the number of good triples consisting of $\lambda$-nonrepeating words.

It suffices to prove that every $\lambda$-nonrepeating $u$ lies in at most $O_a(n^{a+b+c}(\log n)^{a+b-c})$ good triples $(u,v,w)$ with $v,w$ both $\lambda$-nonrepeating. Note that a good triple $(u,v,w)$ is uniquely determined by the data of $u$ and sequences $(I,T)$, $(I',T')$ of insertion or deletion operations for which $w=f_{I,T}(u)$ and $v=f_{I',T'}(w)$. Since $\dist(u,w) \le c$ and $\dist(w,v) \le b$, we may choose $(I,T)$ to have length at most $2c$ and $(I',T')$ to have length at most $2b$.  Since $f_{I', T'}(f_{I,T}(u))=v$, we can ``combine'' the insertions and deletions of $(I,T)$ and $(I', T')$ to obtain a sequence $(I'', T'')$ of insertions and deletions of length $|I''|=|I|+|I'| \leq 2b+2c$ such that $f_{I'', T''}(u)=v$.  Furthermore, there are only $O_a(1)$ choices of $(I,T)$ and $(I',T')$ that produce each such sequence $(I'', T'')$. Thus, for a given $u$, the number of good triples $(u,v,w)$ is at most $O_a(1)$ times the number of ways to pick a sequence $(I'',T'')$ of at most $2b+2c$ total insertions and deletions such that $v = f_{I'',T''}(u)$ is $\lambda$-nonrepeating and $\dist(u,v) \le a$.

By Lemma~\ref{lem:isolated}, the assumption $\dist(u,v) \le a$  implies that at most $2a$ of the elements of $I''$ are $\lambda$-isolated. We claim that the total number of ways to pick such an $I''$ is at most $O_a(n^{a+b+c}(\log n)^{b+c-a})$.  Indeed, we can define an equivalence relation $\sim$ on the elements of $I''$ by setting $i\sim j$ if $|i-j|\leq 2 \lambda$ and then taking the transitive closure.  Let $Q$ denote the number of equivalence classes. There are at most $2a$ equivalence classes of size $1$ coming from $\lambda$-isolated elements, and $y\le 2$ equivalence classes of size $1$ coming from elements $i\in I''$ satisfying $i \le \lambda$ or $i\ge n-\lambda$, which we call \textit{boundary} equivalence classes. Hence, altogether $Q \leq 2a+y+(2b+2c-2a-y)/2=a+b+c+y/2$.  There are at most $n^{Q-y}$ ways to choose the minimal elements of the non-boundary equivalence classes, $\lambda^y$ ways to choose the minimal elements of the boundary equivalence classes, and then $(2\lambda)^{2b+2c-Q}$ ways to choose the remaining elements of $I''$. The quantity $n^{Q-y}(2\lambda)^{y+2b+2c-Q}$ is at most $n^{a+b+c}(2\lambda)^{b+c-a}$, and multiplying by $a+b+c=O_a(1)$ (for the possible values of $Q$ and $y$) establishes the claim. Finally, there are at most $(\alpha+1)^{2b+2c}=O_{a,\alpha}(1)$ ways to 
pick $T''$, and this completes the proof. 
\end{proof}

The proof of Theorem~\ref{thm:main} is now immediate.

\begin{proof}[Proof of Theorem~\ref{thm:main}]
By Lemma~\ref{lem:triangles} with $a=b=c=k$, the number $T$ of triangles in $\Gamma_{n,k,\alpha}$ satisfies $T=O_{\alpha, k}(\alpha^n n^{3k} (\log n)^k)$. Applying Lemma~\ref{lem:sparse-neighborhoods} with $N = \alpha^n$ and $\Delta = O_{\alpha, k} (n^{2k})$, we find that
\[
D(n,k) =\Omega_{\alpha, k}(\alpha^n \log n/n^{2k}),
\]
as desired.
\end{proof}

\section{LCS and SCS Multiplicity} \label{sec:upper-extremal}

In this section, we prove Proposition~\ref{prop:multiplicity-ineq} and Theorem~\ref{thm:multiplicity-upper-bound}.  Before proving Proposition~\ref{prop:multiplicity-ineq}, which says that the LCS multiplicity is always smaller than or equal to the SCS multiplicity, we set up one piece of notation.  Suppose $u$ is a word of length $n$ which contains the word $w$ of length $\ell$ as a subsequence.  Then there is at least one subset $S \subseteq [n]$ of size $\ell$ such that $u_S=w$, and there may be several such subsets.  We define $\Left(u,w)$ to be the smallest of these subsets according to the lexicographic ordering; that is, we choose $S$ to have the smallest possible smallest element, and we break ties by looking at the second-smallest element, and so on.  We can think of $\Left(u,w)$ as describing the position of the ``left-most'' copy of $w$ in $u$.

\begin{proof}[Proof of Proposition~\ref{prop:multiplicity-ineq}]
Let $|u|=m$, $|v|=n$, and $\LCS(u,v)=\ell$, and note that $\SCS(u,v)=m+n-\ell$.  We define an injective map $\varphi$ from the set of LCS's of $u,v$ 
to the set of SCS's of $u,v$, as follows.  Fix an LCS $w$ of $u,v$.  We now construct an SCS $y$ of $u,v$ one letter at a time. 

To illustrate the idea, consider $u=1011$ and $v=0101$. There are two choices of an LCS for $u$ and $v$, namely, $w=101$ and $w=011$. For the first choice $w=101$, we mark its left-most copy in $u = \underline{101}1$ and $v = 0\underline{101}$, and ``glue'' $u$ and $v$ together along these copies to obtain $\varphi(w) = 0\underline{101}1$. For the second choice $w=011$, we mark its left-most copy in $u=1\underline{011}$ and $v=\underline{01}0\underline{1}$ and glue to obtain $\varphi(w) = 1\underline{01}0\underline{1}$.

Here is the formal description of the algorithm. To begin, initialize two indices $i=j=1$ to track our current indices in $u$ and $v$, respectively.  For each $1 \leq k \leq m+n-\ell$, define $y_k$ according to the following algorithm: \begin{enumerate}[(i)]
    \item If $i \notin \Left(u,w) \cup \{m+1\}$, then let $y_k=u_i$ and increment $i$. \label{case:1}
    \item If $i \in \Left(u,w) \cup \{m+1\}$ and $j \notin \Left(v,w) \cup \{n+1\}$, then let $y_k = v_j$ and increment $j$. \label{case:2}
    \item If $i \in \Left(u,w)$ and $j \in \Left(v,w)$, then let $y_k = u_i$ (which is also equal to $v_j$), and increment both $i$ and $j$. (An easy induction shows that the $r$-th time this third possibility occurs, we have $u_i=v_j=w_r$.)\label{case:3}
\end{enumerate}
The number of times the algorithm falls into cases (\ref{case:1}), (\ref{case:2}), (\ref{case:3}) above are (respectively) $m-\ell$, $n-\ell$, and $\ell$, so the algorithm terminates at exactly $i=m+1$, $j=n+1$, with $m+n-\ell$ well-defined letters $y_1,\ldots, y_{m+n-\ell}$. Define $y\coloneqq y_1y_2\cdots y_{m+n-\ell}$. We have $|y|=m+n-\ell$ and $y$ contains $u,v$ as subsequences, so $y$ is in fact an SCS of $u,v$. Finally, let $\varphi(w)=y$.

It remains to show that $\varphi$ is injective, i.e., that $w$ can be recovered from $\varphi(w)$.  Note that item (iii) occurs if and only if $u_i=v_j$, so, working from $k=1$ to $k=m+n-\ell$, we can determine the set $K$ of indices $k$'s for which item (iii) occurs.  By the parenthetical remark in item (iii), we get $w=\varphi(w)_K$, as needed.
\end{proof}

As promised, we now prove Theorem~\ref{thm:multiplicity-upper-bound} on the sharp upper bound for SCS multiplicity (and by extension LCS multiplicity).  In fact, we establish a more general upper bound.  If $u$ and $v$ are words, then we can order all of the common supersequences of $u$ and $v$ by inclusion and study the minimal common supersequences under this partial ordering.  Note that the SCS's of $u,v$ are always minimal common supersequences of $u,v$.  The converse, however, is not always true: 
For instance, if $u=1000$ and $v=0001$, then the unique SCS of 
$u,v$ is $10001$, and the common supersequence $0001000$ does not 
contain any proper subsequence containing $u$ and $v$.

\begin{lemma}\label{lem:min-common-supersequences}
Let $n,a,b$ be natural numbers with $n \geq a+b$.  If $u$ and $v$ are words with length $n-a$ and $n-b$ (respectively) and $\SCS(u,v) = n$, then the number of minimal common supersequences of $u$ and $v$ is at most $\binom{a+b}{a}$.
\end{lemma}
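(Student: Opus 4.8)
The plan is to induct on $|u|+|v|$. It is convenient to replace the data $(n,a,b)$ by the parameters $\alpha:=|v|-\LCS(u,v)$ and $\beta:=|u|-\LCS(u,v)$; using $\LCS(u,v)+\SCS(u,v)=|u|+|v|$, one checks that the hypotheses $|u|=n-a$, $|v|=n-b$, and $\SCS(u,v)=n$ are exactly equivalent to $\alpha=a$ and $\beta=b$. So, writing $M(u,v)$ for the number of minimal common supersequences (MCS's) of $u$ and $v$, the goal becomes $M(u,v)\le\binom{\alpha+\beta}{\alpha}$. The base cases are $\alpha=0$, i.e.\ $v$ is a subsequence of $u$ (so $u$ is the unique MCS), and $\beta=0$, i.e.\ $u$ is a subsequence of $v$ (so $v$ is the unique MCS); in both, $\binom{\alpha+\beta}{\alpha}=1$.

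The basic tool is a ``prefix-trimming'' observation: if $y$ is an MCS and $y_1\notin\{u_1,v_1\}$, then the embeddings of $u$ and of $v$ into $y$ both skip position $1$, so deleting $y_1$ still leaves a common supersequence, contradicting minimality. Hence every MCS $y$ satisfies $y_1\in\{u_1,v_1\}$. Write $u'$ and $v'$ for $u$ and $v$ with their first letters removed. If $u_1=v_1$, then $y_1=u_1$ for every MCS $y$, so $y\mapsto y_2y_3\cdots$ is injective; a short argument (pushing embeddings forward and backward across position $1$) shows the image consists of MCS's of $(u',v')$. Since $u_1=v_1$ forces $\LCS(u',v')=\LCS(u,v)-1$, the pair $(u',v')$ has the same parameters $(\alpha,\beta)$, and as $|u'|+|v'|<|u|+|v|$ the inductive hypothesis gives $M(u,v)\le M(u',v')\le\binom{\alpha+\beta}{\alpha}$.

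Now suppose $u_1\ne v_1$. Splitting the MCS's of $(u,v)$ according to whether $y_1=u_1$ or $y_1=v_1$, the trimming argument injects the first class into the MCS's of $(u',v)$ and the second into the MCS's of $(u,v')$, so $M(u,v)\le M(u',v)+M(u,v')$. Because $u_1\ne v_1$, the standard first-letter recursion for longest common subsequences gives $\LCS(u,v)=\max(\LCS(u',v),\LCS(u,v'))$; hence each of $\LCS(u',v)$ and $\LCS(u,v')$ equals $\LCS(u,v)$ or $\LCS(u,v)-1$, with at least one equal to $\LCS(u,v)$. When $\LCS(u',v)=\LCS(u,v)$, the pair $(u',v)$ has parameters $(\alpha,\beta-1)$; when $\LCS(u,v')=\LCS(u,v)$, the pair $(u,v')$ has parameters $(\alpha-1,\beta)$. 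If both LCS-values are preserved, the inductive hypothesis together with Pascal's identity
\[
\binom{\alpha+\beta-1}{\alpha}+\binom{\alpha+\beta-1}{\alpha-1}=\binom{\alpha+\beta}{\alpha}
\]
closes the argument.

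The hard part is the asymmetric subcase, say $\LCS(u',v)=\LCS(u,v)$ but $\LCS(u,v')=\LCS(u,v)-1$ (together with its mirror image). Here $(u,v')$ has parameters $(\alpha,\beta+1)$, and the crude bound fails because $\binom{\alpha+\beta+1}{\alpha}$ already exceeds $\binom{\alpha+\beta}{\alpha}$. The way to rescue the induction is to exploit that the branch $y_1=v_1$ is severely constrained: if $y$ is an MCS of $(u,v)$ with $y_1=v_1$, then $v$ itself must \emph{fail} to embed into $y_2y_3\cdots$, since otherwise $y_2y_3\cdots$ would be a strictly shorter common supersequence of $u$ and $v$. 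Thus what one really needs to bound is the number of MCS's $z$ of $(u,v')$ into which $v$ does not embed, and the goal is to show this is at most $\binom{\alpha+\beta-1}{\alpha-1}$; added to $M(u',v)\le\binom{\alpha+\beta-1}{\alpha}$, Pascal then again yields $\binom{\alpha+\beta}{\alpha}$. Making this precise --- isolating exactly which MCS's of the ``enlarged'' subproblem survive the lift, presumably via a secondary induction that peels off a further letter (or via a more careful choice of which word and which end to trim) --- is the main technical obstacle, and the natural place to double-check the precise constant against small instances.
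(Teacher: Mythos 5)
Your decomposition is exactly the one in the paper: induct, strip a common first letter when $u_1=v_1$, and otherwise split the minimal common supersequences (MCS's) by first letter into those of the form $u_1x$ and $v_1y$, hoping to finish with Pascal's identity. The paper's entire induction step consists of this observation followed by the words ``the result now follows from Pascal's Identity''; it silently assumes that the two subproblems $(u',v)$ and $(u,v')$ have parameters $(a,b-1)$ and $(a-1,b)$, i.e.\ that deleting either first letter leaves the LCS unchanged. You are right to flag the asymmetric subcase where one of these LCS values drops: it is a genuine obstruction, not a technicality, and your proposal does not close it.

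In fact the rescue you sketch cannot be carried out, because Lemma~\ref{lem:min-common-supersequences} is false as stated. Take $u=1000$, $v=0001$ (the paper's own illustration of an MCS that is not an SCS): here $\LCS(u,v)=3$, $n=\SCS(u,v)=5$, $a=b=1$, so the claimed bound is $\binom{2}{1}=2$, yet $10001$, $0001000$, and $010010$ are three distinct minimal common supersequences. (For $010010$, note $u$ embeds only via the $1$ in position $2$ and $v$ only via the $1$ in position $5$, and one checks that none of $10010$, $00010$, $01010$, $01000$, $01001$ is a common supersequence.) This is precisely your asymmetric case: $\LCS(u,v')=\LCS(1000,001)=2=\LCS(u,v)-1$, and the subproblem $(1000,001)$ has two MCS's into which $v$ does not embed, namely $10010$ and $001000$, exceeding your hoped-for bound of $\binom{1}{0}=1$. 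What does survive is Theorem~\ref{thm:multiplicity-upper-bound}, the statement actually needed: running the same first-letter induction over \emph{shortest} rather than minimal common supersequences, the branch $y_1=v_1$ is empty in the asymmetric case, since stripping $v_1$ from such a $y$ would produce a common supersequence of $(u,v')$ of length $n-1<\SCS(u,v')=n$; hence $\ms(u,v)\le\ms(u',v)\le\binom{a+b-1}{a}$ there, and Pascal is needed only when both LCS values are preserved. I recommend proving the SCS bound directly in this way rather than through the MCS statement.
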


\begin{proof}
We proceed by induction on $a+b$.  The base case $a=b=0$ is trivial.  We now perform the induction step.  If $u,v$ have a common prefix, then every minimal supersequence of $u,v$ must also share this prefix.  By removing any common prefix of $u,v$, we may assume that $u,v$ have different first letters.  The key observation is that every minimal common supersequence of $u,v$ is of the form
$$u_1x \quad \text{or} \quad v_1y,$$
where $x$ is a minimal common supersequence of $u_{[2,n-a]}$ and $v$ and $y$ is a minimal common supersequence of $u$ and $v_{[2,n-b]}$. The result now follows from Pascal's Identity for binomial coefficients, namely, $\binom{j}{i}+\binom{j}{i+1}=\binom{j+1}{i+1}$ for natural numbers $0 \leq i \leq j-1$.
\end{proof}

As mentioned above, Theorem~\ref{thm:multiplicity-upper-bound} follows immediately from the observation that every SCS is a minimal common supersequence. We now show that the methods in Section~\ref{sec:main} can be used to prove that most words $u,v \in [\alpha]^n$ at a given distance have a unique SCS and LCS.

\begin{proposition}
If $n\ge k \geq 1$, then the number of pairs $u,v \in [\alpha]^n$ with $\dist(u,v)=k$ and $\ms(u,v)>1$ is $O_{\alpha, k}(\alpha^{n} n^{2k-1}\log n)$.
\end{proposition}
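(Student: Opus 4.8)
The plan is to run the argument of Section~\ref{sec:main} in miniature: we show that a pair $u,v\in\{0,1\}^n$ with $\dist(u,v)=k$ and $\ms(u,v)>1$ must be joined by a ``clustered'' edit sequence, and then we count such sequences. Fix $\lambda=C_k\log n$ for a sufficiently large constant $C_k$. As in the proof of Lemma~\ref{lem:triangles}, the number of $\lambda$-repeating words in $\{0,1\}^n$ is at most $2^n\binom{n}{2}n^{-C_k}\le 2^nn^{2-C_k}$, so the number of pairs at distance $k$ at least one of whose coordinates is $\lambda$-repeating is at most $2\cdot 2^nn^{2-C_k}\cdot O_k(n^{2k})=o(2^nn^{2k-1}\log n)$, which is negligible. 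For the remaining pairs, $u$ and $v$ are both $\lambda$-nonrepeating; and since $\dist(u,v)=k$ we may choose a sequence $(I,T)$ with $|I|=2k$ --- exactly $k$ deletions and $k$ insertions --- such that $v=f_{I,T}(u)$ (delete an LCS out of $u$ in $k$ steps, build $v$ in $k$ insertions, then reorder the operations into nonincreasing index order). Hence the quantity we must bound is at most the number of triples $(u,I,T)$ with $u$ and $v:=f_{I,T}(u)$ both $\lambda$-nonrepeating, $|I|=2k$, $\dist(u,v)=k$, and $\ms(u,v)>1$.

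The crux is the following claim: in any such triple, the $2k$ elements of $I$ are not all $\lambda$-isolated. Granting this, the proof is finished by a count identical to the one in Lemma~\ref{lem:triangles}. For each fixed $\lambda$-nonrepeating $u$: the number of sequences $(I,T)$ with $|I|=2k$ in which some element of $I$ lies within $\lambda$ of $0$ or $n$ is at most $2k(2\lambda+1)\cdot n^{2k-1}\cdot 3^{2k}=O_k(n^{2k-1}\log n)$; and the number in which two elements of $I$ lie within $2\lambda$ of each other is also $O_k(n^{2k-1}\log n)$ (group the elements of $I$ under the transitive closure of ``within $2\lambda$''; there are at most $2k-1$ groups, hence at most $n^{2k-1}$ ways to place the minimal element of each group, at most $O_k(\lambda^{2k-1})$ ways to place the rest, and $3^{2k}$ ways to choose $T$). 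Summing over the $2^n$ choices of $u$ gives the bound $O_k(2^nn^{2k-1}\log n)$ stated in the proposition.

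To prove the claim, suppose $u,v$ are $\lambda$-nonrepeating, $v=f_{I,T}(u)$ with $|I|=2k$, and every element of $I$ is $\lambda$-isolated; we must show $\ms(u,v)=1$. Then $u$ and $v$ agree outside $2k$ pairwise $2\lambda$-separated windows, each effecting a single insertion or deletion, and --- exactly as in the proof of Lemma~\ref{lem:isolated} --- the length-$\lambda$ subintervals of $u$ flanking these windows each occur exactly once in $u$ and exactly once in $v$. I would deduce $\ms(u,v)=1$ using the recursion for SCS multiplicity underlying the proof of Lemma~\ref{lem:min-common-supersequences}: peeling the longest common prefix of $u,v$ off one letter at a time, the only place an SCS makes a binary choice (does its next letter come from $u$ or from $v$?) is at the boundary of an edit window, and there exactly one branch extends to a common supersequence of the minimal length $\SCS(u,v)$. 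Indeed, taking the other branch permanently misaligns the unique copies of the flanking length-$\lambda$ subinterval of $u$ appearing in $u$ and in $v$; re-synchronizing would then require matching that subinterval across a shift, forcing more than $k$ additional unmatched letters (this is where $\lambda$-nonrepeating and $\lambda>k$ are used) and hence exceeding the length budget of an SCS. Since every binary choice is forced, $u,v$ have a unique SCS.

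The step I expect to be the main obstacle is exactly this last one --- making rigorous that a ``wrong'' choice at an edit-window boundary provably wastes more than the $k$ extra letters an SCS is permitted, and handling non-generic configurations (for instance an inserted or deleted letter that coincides with a neighbor). The clean way to organize it is probably to first show that a $\lambda$-nonrepeating pair differing only in $\lambda$-isolated windows has an essentially unique LCS alignment, one that matches the non-window portions of $u$ and $v$ to each other identically, so that any common supersequence of length $\SCS(u,v)$ realizes that alignment and then interleaves the $2k$ leftover letters in the way dictated by the windows; the $\lambda$-nonrepeating hypothesis is precisely what rules out ``shortcut'' alignments that reuse a shifted copy of a long subinterval of $u$. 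Once the alignment is pinned down, uniqueness of the SCS follows immediately.
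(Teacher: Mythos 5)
Your proposal follows exactly the route the paper intends: the paper deliberately omits the proof of this proposition, saying only that one should exploit that typical pairs at distance $k$ are $\lambda$-nonrepeating and related by $2k$ operations whose positions are all $\lambda$-isolated, and that each operation can then be reconstructed locally --- which is precisely your key claim that all-isolated implies $\ms(u,v)=1$, combined with the same Lemma~\ref{lem:triangles}-style count of edit sequences having a non-isolated position. The part you flag as the main obstacle (rigorously showing that a wrong branch at an edit window forces a misalignment whose cost exceeds the SCS length budget, via the $\lambda$-nonrepeating hypothesis ruling out shifted copies of long factors) is exactly the part the paper also leaves unproved, and your periodicity-based mechanism for it is sound. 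One small slip in your counting: when two elements of $I$ fall within $2\lambda$ of each other you bound the minimal-element placements by $n^{2k-1}$ and the remaining placements by $O_k(\lambda^{2k-1})$ \emph{separately}, whose product is far too large; as in the proof of Lemma~\ref{lem:triangles} you should instead bound the count for each fixed number $Q\le 2k-1$ of groups by $n^{Q}\cdot O_k(\lambda^{2k-Q})\le n^{2k-1}\cdot O_k(\lambda)$ and sum over the $O_k(1)$ values of $Q$, which gives the claimed $O_k(n^{2k-1}\log n)$.
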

\begin{proof}
    If $u,v \in [\alpha]^n$ and $\dist(u,v) = k$, then there exists a sequence of $2k$ operations $(I,T)$ for which $v = f_{I,T}(u)$. There are at most $O_{\alpha, k}(n^{2k})$ choices of $(I,T)$ of length $2k$. We say that $(u,v)$ is \textit{exceptional} if $\ms(u,v)>1$.

    Let $\lambda = 3\log n$. The probability of a uniformly random $u\in [\alpha]^n$ being $\lambda$-repeating is at most $\binom{n}{2} 2^{-\lambda} \le n^{-1}$. The number of exceptional pairs $(u,v)$ for which either $u$ or $v$ is $\lambda$-repeating is thus at most $O_{\alpha,k}(\alpha^n n^{2k-1})$, so it remains to count exceptional pairs where both $u$ and $v$ are $\lambda$-nonrepeating.

    Suppose now that $I$ is $\lambda$-separated. For each $\lambda < j < n-\lambda$, write $I(j) \coloneqq [j-\lambda, j+\lambda]$, $L(j):= [j-\lambda, j-1]$, and $R(j):= [j+1, j+\lambda]$. Since $I$ is $\lambda$-separated, we can partition $u=z_0 u_{I(j_1)} z_1 u_{I(j_2)} \cdots z_{2k-1} u_{I(j_{2k})} z_{2k}$ into subintervals (where the $z_{s}$'s may be empty). Furthermore, since the operations in $(I,T)$ operate only within the $u_{I(j_s)}$'s, $v$ can also be partitioned into $v=z_0 v_{I(j_1)} z_1 v_{I(j_2)} \cdots z_{2k-1} v_{I(j_{2k})} z_{2k}$ where the intermediate subintervals $z_s$ are the same as in $u$. We now claim that the only SCS of $u$ and $v$ is the word $w = z_0 w_1 z_1\cdots w_{2k}z_{2k}$ where $w_s$ is the unique SCS of $u_{I(j_{s})}$ and $v_{I(j_{s})}$ (which differ by one letter).

    The $4k$ intervals
    \[
    L(j_1),R(j_1),L(j_2),R(j_2),\ldots,L(j_{2k}),R(j_{2k})
    \]
    are disjoint intervals of length $\lambda$. By the definitions of $u$, $v$, $I$, and $T$, the subintervals $u_{L(j_s)}$ and $u_{R(j_s)}$ each appear in $u$ exactly once, and for each $s$, $u_{L(j_s)}$ and $u_{R(j_s)}$ are separated by one letter $u_{j_s}$. In $v$, they also appear exactly once each, and for each $s$, $u_{L(j_s)}$ and $u_{R(j_s)}$ are separated by zero or two letters in $v$, depending on whether $f_{j_s, t_s}$ performs an insertion or a deletion.

    Let $w$ be a shortest common supersequence of $u$ and $v$, so the length of $w$ is $|u|+k$. We can form $w$ from $u$ by a sequence of $k$ insertions $(I'_w, T'_w)$, and $v$ from $w$ by a sequence of $k$ deletions $(I''_w,T''_w)$. In particular, the combined operation $(I_w,T_w)$ for which $f_{I_w, T_w} = f_{I''_w, T''_w}\circ f_{I'_w, T'_w}$ is a sequence of $2k$ operations for which $v = f_{I_w,T_w}(u)$. Using the observations in the previous paragraph, we can ``read off'' from the distance between the copies of $u_{L(j_s)}$ and $u_{R(j_s)}$ inside $v$ that $I_w$ must have exactly one element in each $I(j_s)$. Unwinding the definition of $(I_w,T_w)$, this means $w$ must be of the form $w = z_0 w_1 z_1\cdots w_{2k}z_{2k}$ where $w_s$ is the unique SCS of $u_{I(j_{s})}$ and $v_{I(j_{s})}$, as desired.

    This proves the claim and shows that $\ms(u,v)=1$ if $u$ and $v$ are $\lambda$-nonrepeating and $I$ is $\lambda$-separated. The number of choices of $I$ not $\lambda$-separated is at most $O_{\alpha, k}(n^{2k-1}\lambda) = O_{\alpha, k}(n^{2k-1}\log n)$, so the total number of exceptional pairs is at most $O_{\alpha, k}(\alpha^n n^{2k-1}\log n)$, as desired.
\end{proof}

\vspace{3mm}

\noindent {\bf Acknowledgments.} We are grateful to Venkatesan Guruswami for helpful conversations, and to the anonymous referees for comments that improved the presentation of this paper.

{}

\appendix

\section*{Appendix}\label{sec:lcs-tightness}

In this appendix, we construct a family of pairs of words achieving equality in Theorem~\ref{thm:multiplicity-upper-bound}.  It is easy to find pairs of words achieving equality for the SCS bound.  For instance, we can take $u=0^a$, $v=1^b$; then $\SCS(u,v)=a+b$ and $\ms(u,v)=\binom{a+b}{a}$ since the SCS's of $u,v$ are precisely the words containing $a$ $0$'s and $b$ $1$'s.  To find longer words achieving equality with the same values of $a,b$, simply append a fixed word 
$w$ (for instance, $w=0^c$) to the right of both $u,v$.

It seems that there is no similarly simple example achieving equality for the LCS bound, and our construction requires a delicate induction.  If $u$ is a (nonempty) word of length $n$ and $m$ is a natural number, then we define $u^{\langle m \rangle}$ to be the prefix of length $m$ of the infinite word $uuu \cdots$.  For instance, $(01)^{\langle 
 7\rangle}=0101010$ and $(0110)^{\langle 3 \rangle}=011$.  Our extremal example is as follows.

\begin{proposition}
\label{prop:lower}
For every $c\ge1$, the words $u=(10)^{\langle 4c-2 \rangle}$, $v=(0110)^{\langle 4c-2\rangle}$ satisfy $\dist(u,v)=c$ and $\ml(u,v)=\binom{2c}{c}$.
\end{proposition}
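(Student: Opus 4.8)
The plan is to prove Proposition~\ref{prop:lower} by induction on $c$, in parallel with a more refined statement that keeps track of the structure of the LCS's.  First I would verify the base case $c=1$ by hand: here $u=(10)^{\langle 2\rangle}=10$ and $v=(0110)^{\langle 2\rangle}=01$, and one checks directly that $\LCS(u,v)=1$ (so $\dist(u,v)=1$), with the two length-$1$ common subsequences $0$ and $1$, giving $\ml(u,v)=2=\binom{2}{2}$.  For the inductive step, the key is to understand how $u=(10)^{\langle 4c-2\rangle}$ and $v=(0110)^{\langle 4c-2\rangle}$ decompose.  Writing $c'=c-1$, I would peel off a bounded-length prefix (or suffix) from each word so that what remains is a scaled copy of the $c'$ instance, i.e. relate $(10)^{\langle 4c-2\rangle}$ and $(0110)^{\langle 4c-2\rangle}$ to $(10)^{\langle 4c'-2\rangle}$ and $(0110)^{\langle 4c'-2\rangle}$; the periodic structure of both words makes such a decomposition natural, and the lengths $4c-2$ are chosen precisely so the arithmetic works out.

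The heart of the argument is a case analysis on the LCS.  Since $v$ begins with $0110$ and $u$ begins with $10$, I would classify each LCS $w$ of $u,v$ according to how it uses the first block of $v$: either it ``wastes'' the leading $0$ of $v$ (matching $w$ into $v_{[2,\ldots]}$), or it uses the leading $0$ of $v$ (which then must match a $0$ in $u$, and the earliest available $0$ in $u=10\cdots$ is in position $2$).  Each branch reduces to a smaller instance after deleting a constant number of letters from the front of $u$ and $v$; by the inductive hypothesis each smaller instance contributes the appropriate binomial count, and the two branches correspond exactly to the two terms of Pascal's identity $\binom{2c}{c}=\binom{2c-1}{c-1}+\binom{2c-1}{c}$ applied twice, or more cleanly to a ``Catalan-like'' recursion that telescopes to $\binom{2c}{c}$.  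Simultaneously I would track that $\dist(u,v)=c$: the decomposition shows $\LCS(u,v)\le \LCS$ of the smaller instance plus a constant, giving the $\ge$ bound on distance, while exhibiting one explicit common subsequence of length $(4c-2)-c=3c-2$ gives the matching $\le$ bound.

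The main obstacle I anticipate is bookkeeping: making the reduction clean enough that the recursion genuinely has the form $m_c = m_{c-1}^{(1)} + m_{c-1}^{(2)}$ with both pieces being honest instances of the same family (possibly with the roles of $u$ and $v$ or of $0$ and $1$ swapped, and possibly needing the ``append a fixed word'' trick from the SCS discussion to pad lengths back to the required form).  A naive prefix-stripping will produce words like $(0110)^{\langle 4c-2\rangle}$ with a few letters removed, which are \emph{not} literally of the form $(0110)^{\langle \cdot\rangle}$, so I would either (a) prove the stronger statement for the slightly larger family of all prefixes/suffixes of these periodic words, carrying enough invariants (length mod $4$, current ``phase'' of each periodic word, and the exact LCS length) through the induction, or (b) observe that the removed letters can always be matched greedily and absorbed, so that the residual LCS-counting problem is isomorphic to the $c-1$ problem.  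I would also need the easy fact that $\dist(u,v)\ge c$ cannot be beaten, which should follow from Theorem~\ref{thm:multiplicity-upper-bound} combined with the lower bound $\ml(u,v)=\binom{2c}{c}$: if the distance were smaller than $c$, the LCS multiplicity would be at most $\binom{2(c-1)}{c-1}<\binom{2c}{c}$, a contradiction — so in fact the distance computation comes for free once the multiplicity count is established, which is a pleasant simplification of the plan.
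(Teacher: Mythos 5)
Your plan, in its branch (a), is essentially the paper's proof: the paper establishes closed forms for $\LCS$ and $\ml$ on the full two-parameter family $(10)^{\langle a\rangle},(0110)^{\langle b\rangle}$ ($b$ even) via a recursion that classifies each LCS by its first letter as $1x$ or $01y$ — stripping half a period of $v$ and one or three letters of $u$, with a $0\leftrightarrow1$ swap — while carrying the exact LCS length $\ell(a,b)$ as an invariant, so that iterating Pascal's identity yields $m(4c-2,4c-2)=\binom{2c}{c}$. Two small points of care for your version: the case split must partition the set of distinct LCS \emph{words} (hence classify by the first letter of $w$, not by which embedding uses $v_1$, since a given $w$ may embed both ways), and in the base case $\ml(10,01)=2=\binom{2}{1}$, not $\binom{2}{2}$.
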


Let us explain why Proposition~\ref{prop:lower} provides equality cases for Theorem~\ref{thm:multiplicity-upper-bound} for all choices of $a,b$.  Suppose $a_0,b_0$ are given, and consider the words $u,v$ produced by the $c=a_0+b_0$ case of Proposition~\ref{prop:lower}.  Since equality in Theorem~\ref{thm:multiplicity-upper-bound} is achieved for $(a,b)=(c,c)$ by $u,v$, we see that equality is also achieved for all of the other pairs of words considered in the inductive argument of Theorem~\ref{thm:multiplicity-upper-bound} (which can easily be run directly with LCS's rather than passing through SCS's).  For instance, the words $u_{[2,n-a]}, v$ are an equality case of Theorem~\ref{thm:multiplicity-upper-bound} for $(a,b)=(c,c-1)$, and the words $u, v_{[2,n-b]}$ are an equality case of Theorem~\ref{thm:multiplicity-upper-bound} for $(a,b)=(c-1,c)$.  Continuing in this manner, we eventually reach an equality case for $(a,b)=(a_0, b_0)$, as needed.

To prove Proposition~\ref{prop:lower}, we recursively compute $\LCS(u,v)$ and $\ml(u,v)$ for all words $u=(10)^{\langle a \rangle}$, $v=(0110)^{\langle b\rangle}$ with $b$ even.  We introduce the notation $\ell(a,b)\coloneqq\LCS((10)^{\langle a\rangle},(0110)^{\langle b\rangle})$ and $m(a,b)\coloneqq \ml((10)^{\langle a\rangle},(0110)^{\langle b\rangle})$. We begin by computing $\ell(a,b)$.
\begin{lemma}
\label{lem:ell}For $a,b\ge0$ with $b$ even, we have
\[
\ell(a,b)=\begin{cases}
a & \text{if }a\le b/2\\
\frac{b}{2}+\floor{\frac{2a-b}{4}} & \text{if }\frac{b}{2}<a\le\frac{3b}{2}\\
b & \text{if }a>3b/2.
\end{cases}
\]
\end{lemma}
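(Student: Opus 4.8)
It is convenient to first rewrite the three-case formula in the uniform shape
\[
\ell(a,b)=\min\!\left(a,\;b,\;\floor{\tfrac{2a+b}{4}}\right).
\]
Since $b$ is even, a short computation shows that $a\le\floor{(2a+b)/4}$ exactly when $a\le b/2$, that $b\le\floor{(2a+b)/4}$ exactly when $a\ge 3b/2$, and that $\floor{(2a+b)/4}=\tfrac b2+\floor{(2a-b)/4}$; in particular the three cases of the statement coincide at the thresholds $a=b/2$ and $a=3b/2$, and the displayed minimum reproduces the stated value in each regime. As $\ell(a,b)\le\min(|u|,|v|)=\min(a,b)$ is automatic, it remains to exhibit a common subsequence of $(10)^{\langle a\rangle}$ and $(0110)^{\langle b\rangle}$ of length $\min(a,b,\floor{(2a+b)/4})$ and to prove the single inequality $\ell(a,b)\le\floor{(2a+b)/4}$.

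\textbf{Lower bound.} If $a\le b/2$, the letters of $v=(0110)^{\langle b\rangle}$ at the even positions $2,4,\dots,b$ spell $(10)^{\langle b/2\rangle}$, of which $(10)^{\langle a\rangle}$ is a prefix, so $\ell(a,b)\ge a$. If $a\ge 3b/2$, one embeds $v$ into $u_1u_2\cdots u_{3b/2}$ by matching the $j$-th period $0110$ of $v$ to the letters of $u$ at positions $6j-4,6j-3,6j-1,6j$ (these read $0,1,1,0$ since $u_k$ alternates), so $\ell(a,b)\ge b$. In the intermediate regime I would interpolate between these two strategies: assign to each of the $m\approx b/4$ periods of $v$ one of three ``modes'' --- match two of its letters using two consecutive letters of $u$, match the subsequence $010$ using three consecutive letters of $u$, or match all of $0110$ using letters of $u$ in six consecutive positions as above. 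Using these modes on $k_2,k_3,k_4$ periods respectively (and ordering the periods suitably), one matches $2m+(k_3+2k_4)$ letters while consuming $2m+2(k_3+2k_4)$ letters of $u$; taking $k_3+2k_4=t:=\floor{(2a-b)/4}$ (feasible since $\lceil t/2\rceil\le m$ when $a\le 3b/2$) matches exactly $\tfrac b2+t$ letters, as required. The case $b\equiv 2\pmod 4$ needs only a minor adjustment for the trailing partial period $01$ of $v$.

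\textbf{Upper bound.} Let $w$ be an arbitrary common subsequence, set $p=|w|$, and let $q$ be the number of indices $k$ with $w_k=w_{k+1}$. I would prove two inequalities and add them. First, from an embedding of $w$ into $u$ at positions $i_1<\dots<i_p$: since $u$ is alternating, $w_k=w_{k+1}$ forces $i_{k+1}-i_k$ to be even, hence $\ge 2$; summing the gaps and using that a leading $0$ of $w$ must sit at an even position of $u$ and a trailing $1$ at an odd position gives
\[
p+q\le a-[w_1=0]-[\,w_p=1\text{ and }a\text{ even}\,].
\]
Second, from an embedding of $w$ into $v$ together with the block decomposition of $v$ --- a leading singleton $0$, then two-letter blocks $11,00,11,\dots$ alternating, then a trailing singleton whose value depends on $b\bmod 4$: whenever $w$ uses both letters of a two-letter block, they must be consecutive and equal in $w$, so pigeonholing the ones and zeros of $w$ against the $\approx b/4$ one- and zero-blocks yields
\[
q\ge p-\tfrac b2+1-[w_1=0]-[\,w_p\text{ equals the trailing singleton of }v\,].
\]
Adding these and running a short case analysis on $w_p$, on $b\bmod 4$, and on the parity of $a$, the correction terms collapse to give $2p\le a+\tfrac b2$, i.e.\ $p\le\floor{(2a+b)/4}$.

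The main obstacle is exactly this last bookkeeping. Without the $[\,\cdot\,]$ corrections the two inequalities combine only to $2p\le a+\tfrac b2+1$, which is too weak by one in half of the residue classes and would not yield the floor in the statement; pinning down the correct constant forces one to track simultaneously whether $w$ begins with $0$, which letter $w$ ends with, the parity of $a$, and $b\bmod 4$ --- which is precisely what produces the $\floor{(2a-b)/4}$. (One could instead try to induct on $a+b$ through the ``$v_1$ is matched or not'' recursion, but that leaves the family $\{(10)^{\langle a\rangle},(0110)^{\langle b\rangle}\}$ and forces one to carry along the LCS of every cyclic shift of $10$ against every cyclic shift of $0110$, which is an even more delicate bookkeeping.)
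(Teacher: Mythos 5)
Your proposal is correct, but it takes a genuinely different route from the paper. The paper observes that every LCS of $u=(10)^{\langle a\rangle}$ and $v=(0110)^{\langle b\rangle}$ has the form $1x$ or $01y$, and that stripping the corresponding prefixes and swapping $0\leftrightarrow 1$ lands back in the \emph{same} two-parameter family; this yields the recurrence $\ell(a,b)=\max\{1+\ell(a-1,b-2),\,2+\ell(a-3,b-2)\}$, against which the closed form is verified. (Your closing worry that the recursive route forces one to track all cyclic shifts of $10$ against all cyclic shifts of $0110$ is unfounded for exactly this reason: the bit-swap symmetry closes the recursion after one step.) Your argument is instead a direct two-sided one: explicit matchings for the lower bound, and for the upper bound a double count via the statistic $q=\#\{k: w_k=w_{k+1}\}$, played off between the alternating structure of $u$ and the block structure $0\,|\,11\,|\,00\,|\cdots|\,\ast$ of $v$. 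I checked your two inequalities and they are right; moreover the final bookkeeping is easier than you fear: the $[w_1=0]$ corrections cancel upon combining, and the remaining terms satisfy $[w_p=\text{trailing}]-[w_p=1\wedge a\text{ even}]\le 1$, so you get $2p\le a+b/2$ outright and the floor follows from integrality of $p$ --- no case analysis on $b\bmod 4$ or the parity of $a$ is needed. The trade-off between the approaches: yours produces certificates for both bounds and a transparent reformulation $\ell(a,b)=\min(a,b,\lfloor(2a+b)/4\rfloor)$, but the paper's recurrence is shorter and, crucially, is reused verbatim to compute the multiplicity $m(a,b)$ in the next lemma, which your direct argument does not set up. The one place where your write-up is still a sketch is the intermediate-regime lower bound (the three ``modes'' and the ordering of periods), but the accounting there is consistent and routine to complete.
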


\begin{proof}
The pairs $(a,b)$ with $a\le2$ or $b=0$ can be checked by hand, so we restrict our attention to $a\ge3$ and $b\ge2$.  Note that every LCS of $u,v$ is, according to its first letter, of the form
$$1x \quad \text{or} \quad 01y,$$
where $x$ is an LCS of $(01)^{\langle a-1 \rangle}, (1001)^{\langle b-2 \rangle}$ and $y$ is an LCS of $(01)^{\langle a-3 \rangle}, (1001)^{\langle b-2 \rangle}$.  Exchanging the roles of $0$ and $1$, we find that
$$\LCS((01)^{\langle a-1 \rangle}, (1001)^{\langle b-2 \rangle})=\LCS((10)^{\langle a-1 \rangle}, (0110)^{\langle b-2 \rangle})=\ell(a-1,b-2),$$
and likewise $\LCS((01)^{\langle a-3 \rangle}, (1001)^{\langle b-2 \rangle})=\ell(a-3,b-2)$.  It follows that
$$\ell(a,b)=\max\{1+\ell(a-1,b-2),2+\ell(a-3,b-2)\},$$
and it is not difficult to check that the function defined in the lemma statement is the unique function satisfying this recurrence and the same initial conditions.
\end{proof}

It remains to compute $m(a,b)$.
\begin{lemma}
\label{lem:lower-general}
For $a,b\ge0$ with $b$ even, we have
\[
m(a,b)=\begin{cases}
\binom{b/2}{(2a-b)/4} & \text{if }2a\equiv b\pmod4\\
\binom{b/2+1}{(2a-b+2)/4} & \text{if }2a\equiv b+2\pmod4.
\end{cases}
\]
\end{lemma}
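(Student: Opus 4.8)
The plan is to set up a recurrence for $m(a,b)$ that parallels the recurrence $\ell(a,b)=\max\{1+\ell(a-1,b-2),\,2+\ell(a-3,b-2)\}$ from the proof of Lemma~\ref{lem:ell}, and then to verify the claimed closed form by induction on $b$ in steps of $2$, with $b=0$ and $a\le 2$ as base cases checked directly. For the recurrence, fix $a\ge 3$ and $b\ge 2$ even and write $u=(10)^{\langle a\rangle}$, $v=(0110)^{\langle b\rangle}$. As recalled in the proof of Lemma~\ref{lem:ell}, every LCS of $u,v$ has the form $1x$ or $01y$; the fact that no LCS can begin with $00$ uses that $\ell$ is strictly monotone in the sense $\ell(a',b')>\ell(a'-1,b'-2)$ (an easy consequence of the formula in Lemma~\ref{lem:ell}), since matching an initial $00$ to the leftmost $0$'s of $u$ and $v$ would cap the LCS length at $2+\ell(a-4,b-4)<2+\ell(a-3,b-2)\le\ell(a,b)$. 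A ``shift the embedding as far left as possible'' argument then makes $x\mapsto 1x$ a bijection from the LCS's of $(01)^{\langle a-1\rangle},(1001)^{\langle b-2\rangle}$ onto the LCS's of $u,v$ beginning with $1$ (provided $1+\ell(a-1,b-2)=\ell(a,b)$, and there are none otherwise), and $y\mapsto 01y$ a bijection from the LCS's of $(01)^{\langle a-3\rangle},(1001)^{\langle b-2\rangle}$ onto those beginning with $0$. Swapping $0$'s and $1$'s identifies these counts with $m(a-1,b-2)$ and $m(a-3,b-2)$, and since LCS's beginning with $1$ and with $0$ are disjoint we obtain
$$m(a,b)=\mathbbm{1}\!\left[\ell(a,b)=1+\ell(a-1,b-2)\right]m(a-1,b-2)+\mathbbm{1}\!\left[\ell(a,b)=2+\ell(a-3,b-2)\right]m(a-3,b-2).$$

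Next I would use Lemma~\ref{lem:ell} to evaluate $\ell(a-1,b-2)-\ell(a-3,b-2)$, which a short case analysis shows equals $2$ when $a\le b/2$, equals $1$ when $b/2<a<3b/2$, and equals $0$ when $a\ge 3b/2$. Combined with $\ell(a,b)=\max\{1+\ell(a-1,b-2),\,2+\ell(a-3,b-2)\}$, the recurrence collapses to $m(a,b)=m(a-1,b-2)$ on the first range, $m(a,b)=m(a-1,b-2)+m(a-3,b-2)$ on the (interesting) middle range, and $m(a,b)=m(a-3,b-2)$ on the last range. The crucial bookkeeping point is that $2a-b$, $2(a-1)-(b-2)$, and $2(a-3)-(b-2)$ are all congruent modulo $4$, so whichever of the two cases in the lemma applies to $(a,b)$ also applies to $(a-1,b-2)$ and $(a-3,b-2)$. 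In the middle range the recurrence then becomes precisely Pascal's identity for the proposed binomial coefficients---for instance, when $2a\equiv b\pmod 4$ it reads $\binom{b/2}{(2a-b)/4}=\binom{b/2-1}{(2a-b)/4}+\binom{b/2-1}{(2a-b)/4-1}$---and on the two outer ranges the identities $m(a,b)=m(a-1,b-2)$ and $m(a,b)=m(a-3,b-2)$ are immediate from the formula. Together with the base cases, this completes the induction.

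The step I expect to be the main obstacle is the boundary bookkeeping: checking that the three ranges above partition the integers correctly, that the piecewise formula of Lemma~\ref{lem:ell} is genuinely consistent across its own regime boundaries (the floor functions, and the dependence on the parity of $b/2$, need care), and that the base cases $b=0$ and $a\in\{0,1,2\}$ really do anchor the induction in each of the three ranges while the subproblems $(a-1,b-2)$ and $(a-3,b-2)$ never stray outside the region where the formula is valid. None of this is conceptually difficult, but it is precisely the sort of calculation in which a stray floor or an off-by-one can sink the argument; once the mod-$4$ invariance is in hand, the substantive content of the inductive step is just Pascal's identity.
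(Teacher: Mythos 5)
Your proposal matches the paper's proof essentially step for step: the same recurrence for $m(a,b)$ driven by the first-letter decomposition from the proof of Lemma~\ref{lem:ell}, the same collapse into the three ranges $a\le b/2$, $b/2<a<3b/2$, $a\ge 3b/2$ using the exact values of $\ell$, and the same mod-$4$ invariance plus Pascal's identity to close the induction. The only difference is that you spell out the bijection justifying the multiplicity recurrence and the congruence bookkeeping, which the paper leaves implicit with ``following the same case distinction as in Lemma~\ref{lem:ell}.''
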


\begin{proof}
As in Lemma~\ref{lem:ell}, we deal separately with the small cases where $a=0$ or $b \leq 2$.  Otherwise, following the same case distinction as in Lemma~\ref{lem:ell}, we find that
$$m(a,b)=m(a-1,b-2) \cdot \mathbbm{1}_{1+\ell(a-1,b-2)\geq 2+\ell(a-3,b-2)}+m(a-3,b-2)\cdot \mathbbm{1}_{1+\ell(a-1,b-2)\leq 2+\ell(a-3,b-2)},$$
where $\mathbbm{1}$ is the $0$-$1$ indicator function of its argument.  Using the exact values of $\ell$ from Lemma~\ref{lem:ell}, we can rewrite this equation as
\[
m(a,b)=\begin{cases}
m(a-1,b-2) & \text{if }a\le b/2\\
m(a-1,b-2)+m(a-3,b-2) & \text{if }b/2<a<\frac{3b}{2}\\
m(a-3,b-2) & \text{if }a\ge\frac{3b}{2},
\end{cases}
\]
and the lemma follows from induction and Pascal's Identity.
\end{proof}

Taking $a=b=4c-2$ in the previous two lemmas gives Proposition~\ref{prop:lower}.

\end{document}